\documentclass[fleqn,12pt]{article}

\setlength{\columnsep}{0.55cm} 
\setlength{\fboxrule}{0.75pt} 

\usepackage[T1]{fontenc}
\setcounter{secnumdepth}{3}
\usepackage{float}
\usepackage{units}
\usepackage{mathrsfs}
\usepackage{bm}
\usepackage{amsmath}
\usepackage{amssymb}
\usepackage{graphicx}
\usepackage{esint}
\usepackage[numbers]{natbib}

\usepackage{simplemargins}
\setallmargins{1in}

\newtheorem{theorem}{Theorem}[section]

\newtheorem{proposition}[theorem]{Proposition}
\newtheorem{corollary}[theorem]{Corollary}
\newcommand{\E}{\mathrm{E}}
\newcommand{\Var}{\mathrm{Var}}
\newcommand{\Cov}{\mathrm{Cov}}
\newenvironment{proof}[1][Proof]{\begin{trivlist}
\item[\hskip \labelsep {\bfseries #1}]}{\end{trivlist}}

\newcommand{\qed}{\nobreak \ifvmode \relax \else
      \ifdim\lastskip<1.5em \hskip-\lastskip
      \hskip1.5em plus0em minus0.5em \fi \nobreak
      \vrule height0.75em width0.5em depth0.25em\fi}

\begin{document}

\begin{center}
\begin{Large}
Distribution of a Non-parametric Wavelet-based Statistic for Functional Data
\end{Large}

\vspace{.2in}

S. B. Girimurugan\footnote{Corresponding author: sgirimurugan@fgcu.edu} \\
Department of Mathematics \\
Florida Gulf Coast University \\

\vspace{.2in}

Eric Chicken \\
Department of Statistics \\
Florida State University \\
\end{center}

\flushbottom 

\begin{small}
\abstract{Mathematical formulations and proofs for a wavelet based statistic employed in functional data analysis is elaborately discussed in this report. The propositions and derivations discussed here apply to a wavelet based statistic with hard thresholding. The proposed analytic distribution is made feasible only due to the assumption of normality. Since the statistic is developed for applications in high dimensional data analysis, the assumption holds true in most practical situations. In the future, the work here could be extended to address data that are non-Gaussian.
Aside from establishing a rigorous mathematical foundation for the distribution of the statistic, the report also explores a few approximations for the proposed statistic.
\\
\\
\begin{footnotesize}
\textbf {Keywords:} wavelets; ANOVA; functional data; profiles; signal processing; distribution; probability theory
\end{footnotesize}
}
\end{small}

\thispagestyle{empty}

\section{Introduction}

Functional data are obtained from experiments that result in a smooth curve as response. In its original context, such data are assumed to have repeated measurements of smooth curves sampled from a population of curves defined by a single parameter or finitely many parameters. Several techniques unique to handling such data have been developed and the interested reader can find those in \citep{Ramsay2005}. The data considered here are noisy functional responses where an underlying smooth curve is perturbed by additive noise. In addition, the assumed functional model is non-parametric because the underlying structure is not explained by coefficients of a parametric model. The only assumption made in regards to the underlying structure is that the functional response is $\ell_2$ integrable. Sample realizations of functional responses from `T' treatments can be given as,
\begin{eqnarray}
Y_{ijk}=f_{ij}(x_k)+\epsilon
\label{e1}
\end{eqnarray}
where $i=1,2,\ldots,T$, $j=1,2,\ldots,r_i$ and $k=1,2,\ldots,n$. Also, $r_i$ is the number of replicates of the functional response obtained from the $i^{th}$ treatment. Thus, $Y_{ijk} \in \mathbb{R}^n$ and $\epsilon$ is the n-dimensional additive Gaussian noise. It can be seen that the structure of equation \eqref{e1} poses a striking resemblance to models frequently employed in multi-variate statistics.
\\
Functional data, in fact, has many commonalities with multivariate statistical techniques. For instance, a d-dimensional response vector is a multivariate random variable  and  differences among such vectors are detected by multivariate testing models. A functional response in reality is also a multidimensional vector and one might assume that a multivariate test would be sufficient. However, since functional responses are discretized (as shown in \eqref{e1}), statistical independence of the individual random variables within the vector is not guaranteed. Moreover, as the dimension of the functional response approaches infinity multivariate tests suffer from the curse of dimensionality where the dimension of the response exceeds the number of replicates. Many authors have proposed testing methods which address these issues. Fan and Li \cite{Fan1996, Fan1998} proposed methods using Neyman's truncation method and Fourier Transforms. The method, known as HANOVA, offered ways to circumvent the problems encountered with increasing dimensions by appropriately choosing Fourier coefficients instead of the raw data itself. Wavelet transforms \cite{Daubechies1992} offer better time-frequency localization when compared to Fourier Transforms and methods in \cite{Fan1996,Fan1998} can be improved by using wavelet coefficients instead of Fourier coefficients. One approach using wavelet coefficients is discussed in \cite{rosner2000wavelet} which employs an ANOVA setup for functional data in the wavelet domain. The proposed statistic discussed here takes on a different approach when compared to the setup in \cite{rosner2000wavelet}. Aside from the definition of the statistic and its application in testing functional data, the primary focus of this report is in establishing a theoretical foundation for the statistic and its approximations.
\section{Terminology}
Suppose the actual functional response for the $i^{th}$ treatment is defined as,
\begin{eqnarray}
f_i(\textbf{s})=f(\textbf{s})+\mu(\textbf{s})
\label{e2}
\end{eqnarray}
where \textbf{s} is a unit of time (or space). $f(\textbf{s})$ is the mean response common to all treatments and $\mu(\textbf{s})$ is the effect introduced by the $i^{th}$ treatment. If the time domain ($\textbf{s}$) is sampled at $(n-1)$ equally spaced intervals, then $f_i(k), k=1,2,\ldots n$ represents the discretized version of $f_i(\textbf{s})$. Thus, \eqref{e1} refers to the sample realization of $f_{i}(k)$ with additive noise of its $j^{th}$ replicate. \\
\\
For a sample of functional responses, $Y_{i.}$ corresponds to the mean functional response of the $i^{th}$ treatment obtained from its $r_i$ replicates. $Y_{..}$  represents the overall mean of all averaged responses from the `i' treatments. An ANOVA type test statistic for the following test,
\begin{eqnarray}
H_0 &:& f_1=f_2=\ldots = f_T \\
H_1 &:& \exists k \ni f_k \neq f_j, j \in \lbrace 1,\ldots ,T\rbrace \setminus k
\label{e3}
\end{eqnarray}
is given as,
\begin{eqnarray}
\vartheta &=& \sum_{i=1}^{T} (Y_{i.}-\bar{Y_{..}})\Sigma^{-1}(Y_{i.}-\bar{Y_{..}})^\prime
\label{e4}
\end{eqnarray}
It can be understood that \eqref{e4} is a common multivariate test statistic and for functional data such a statistic has a non-central F distribution \cite{Ramsay2005, rosner2000wavelet}.
\section{Motivation}
The statistic in \eqref{e4} is dependent on the estimation of $\Sigma$. Although estimating $\Sigma$ is easier when independence is guaranteed in the functional response, correlated responses tend to make the estimation of $\Sigma$ complicated. From a computational standpoint, $\Sigma$  may not be sparse and inverting such a matrix might be computationally extensive in higher dimensions. The proposed statistic is computationally efficient in this regard. This is mostly due to the pyramid algorithm which is used to perform a discrete wavelet transform on a signal and the lack of a need to invert $\Sigma$. \\
Moreover, wavelet shrinkage schemes offer sparsity along with noise reduction. The noise removal properties are highly desired in wavelet estimation of functions and the inherent sparsity offered by them make wavelet transforms more appealing in high-dimensional data analysis. Also, since wavelet transforms offer decorrelation, the wavelet coefficients are uncorrelated even if the original response is correlated. The coherence of these properties naturally improve the power of a statistical test with better computational effectiveness.
\section{Discrete Wavelet Transform}
A discrete wavelet transform (DWT) transforms a function $f(x)$ such that,

\begin{eqnarray}
f(x)=\sum_{k=1}^{M_{0}}V_{j_0k}\phi_{0k}(x)+\sum_{j=j_0}^{\infty}\sum_{k=1}^{M_{j}}W_{jk}\psi_{jk}(x)
\label{e36}
\end{eqnarray}
That is, the function is represented as a sum of orthogonal wavelet functions. It can be noticed that the decomposition has a construct similar to a Fourier decomposition. The difference in equation \eqref{e36} is due to the wavelet basis used in place of the trigonometric basis. Despite a minor, subtle change in the definition, the DWT offers better time-frequency localization as a primary consequence of the wavelet basis. In \eqref{e36}, $j,k \in \mathbb{Z}$ where $`j$' refers to the level of decomposition and $`k$' refers to a location in time or space. $`j_0$' corresponds to the lowest level of decomposition.\\
 A full DWT of a functional response, $f(x) \in \mathbb{R}$, with dimension $n=2^J$ will have $J$ levels of decomposition with $j_0=0$. The topmost level (j=J) represents the functional response itself and $j=0,1,\ldots,J-1$. A partial DWT upto level $j_0$ will have $j=j_0,j_0+1\ldots,J-1$.

The coefficients are obtained using a familiar approach, akin to Fourier coefficients. Here,
\begin{eqnarray}
W_{jk} &=& \int_{-\infty}^{\infty} f(x)\psi_{jk} dx\\
V_{jk} &=& \int_{-\infty}^{\infty} f(x)\phi_{jk} dx
\label{e37}
\end{eqnarray}
where,
\begin{eqnarray}
\psi_{jk} &=& 2^{\nicefrac{j}{2}}\psi \left(2^{j}x-k\right)\\
\phi_{jk} &=& 2^{\nicefrac{j}{2}}\phi \left(2^{j}x-k\right)
\label{e37}
\end{eqnarray}
Since the dilations are carried out in powers of 2, DWT has a minimal requirement where the sample size of the response is a power of 2. Zero padding can be employed to meet this requirement in responses that have a sample size which is not a power of 2. Alternatively, reflection of the data can also be employed \cite{hollander2013nonparametric}. The transform can be accomplished efficiently and faster than FFT via the pyramid algorithm, and for a response vector $Y \in \mathbb{R}^n , n=2^J$, the coefficients at level `j' can be obtained as,
\begin{eqnarray}
W_j &=& \Psi_j Y \\
V_j &=& \Phi_j Y
\label{e11}
\end{eqnarray}
where $\Psi_j$ and $\Phi_j$ are the filter coefficient matrices at the $j^{th}$ level.  In equation \eqref{e11}, the high-pass filter matrix $\Psi_j, \Phi_j \in \mathbb{R}^{\nicefrac{N}{2^{(J-j)}} \times \nicefrac{N}{2^{(J-(j-1))}}}$ resulting in $W_j, V_j \in \mathbb{R}^{\nicefrac{N}{2^{(J-j)}}}$ . The pyramid algorithm \cite{Mallat1989} uses these high-pass ($W_j$) and low-pass ($V_j$) coefficients in a recursive, top-down approach across the levels. Thus, the final set of coefficients will be comprised mostly $W_j$ coefficients. That is,
\begin{eqnarray}
\Theta = [W_{(J-1)}, W_{(J-2)},\ldots, W_{j_0},V_{j_0}]^\prime
\label{e12}
\end{eqnarray}
for a partial DWT up to level $j_0$. Whereas, for a full DWT, the coefficient vector is given by,
\begin{eqnarray}
\Theta = [W_{(J-1)}, W_{(J-2)},\ldots, W_{0},V_{0}]^\prime
\label{e13}
\end{eqnarray}
The reader should be aware that the dimension of $\Theta$ in equations \eqref{e12} and \eqref{e13} is `n'. The dimension is reduced by selecting a subset of these coefficients while discarding others. The methods used to select this subset are collectively known as "wavelet shrinkage" or "wavelet thresholding" schemes \cite{Abramovich1996a,Donoho1995,Donoho1994a,cai1999adaptive}. Although several methods have been discussed in the past, the proposed statistic employs the hard, VisuShrink procedure \cite{Donoho1994} with the thresholding parameter given by,
\begin{eqnarray}
\lambda = \sigma\sqrt{2\ln(n)}
\label{e14}
\end{eqnarray}
where $\sigma$ is the noise parameter which can be estimated using the standard deviation or the median absolute deviation (MAD) which is a robust estimator \cite{Ogden1997} in wavelet applications. Let $\hat{\Theta}$ denote the vector of wavelet coefficients after a threshold.
\section{Wavelet Statistic for Functional Data}
The proposed wavelet based statistic to test differences in functional data for the test in \eqref{e3} is $\kappa_\eta$. Using the terminology discussed in the previous sections, the wavelet coefficients of $\hat{\sigma}^{-1}(\bar{Y}_{i.}-\bar{Y}_{..})$ for the $i^{th}$ treatment is given by,
\begin{eqnarray}
\Theta_i &=& [\theta_{i1},\theta_{i2},\ldots,\theta_{in}]^\prime
\label{e15}
\end{eqnarray}
The dimension of the coefficient vector in \eqref{e15} is reduced via wavelet thresholding with the $\lambda$ in \eqref{e14} . The resulting coefficients can be denoted as,
\begin{eqnarray}
\hat{\Theta}_i &=& [\theta_{i1},\theta_{i2},\ldots,\theta_{ik}]^\prime
\label{e151}
\end{eqnarray}
where, $k \ll n$.\\
Since wavelet transforms are energy preserving, the responses in $\vartheta$ from \eqref{e4}  should satisfy the following inequality as a consequence of Parseval's identity \cite{spiegel1965laplace}.
\begin{eqnarray}
\parallel \Theta_i \parallel_2^{2} &\leq & \parallel (\bar{Y}_{i.}-\bar{Y}_{..}) \parallel_2^{2}
\label{e16}
\end{eqnarray}
An obvious result from \eqref{e16} can be deduced for thresholded coefficients as,
\begin{eqnarray}
\parallel \hat{\Theta}_i \parallel_{2}^{2} &<& \parallel \Theta_i \parallel_2^{2}
\label{e17}
\end{eqnarray}
Using equations \eqref{e16} and \eqref{e17}, a wavelet based statistic can be obtained as,
\begin{eqnarray}
\kappa_\eta &=& \sum_{i=1}^{T} \hat{\Theta}_i^2 \\
&=& \sum_{i=1}^{T} \sum_{k=1}^{m_i} \hat{\theta}_{ik}^2, \quad \quad m_i \ll n
\label{e18}
\end{eqnarray}
where,
\begin{eqnarray}
\Theta_i &=& \mathbb{W} \left[ \hat{\gamma}^{-1}(\overline{Y}_{i.}- \overline{Y}_{..})\right]^\prime \mathbb{W} \left[ \hat{\gamma}^{-1}(\overline{Y}_{i.}- \overline{Y}_{..}) \right]
\label{e19}
\end{eqnarray}
 The normalizing factor in \eqref{e19}, $\hat{\gamma}$, is an estimator of functional variance. For the $i^{th}$ treatment, we have
\begin{eqnarray}
\E[\bar{Y}_{i.}-\bar{Y}_{..}]=0
\label{at1}
\end{eqnarray}
then,
\begin{eqnarray}
\Var[\bar{Y}_{i.}-\bar{Y}_{..}] & = & \E[(\bar{Y}_{i.}-\bar{Y}_{..})^2] \\
& = & \Var [\bar{Y}_{i.}]+ \Var[\bar{Y}_{..}]-2\Cov(\bar{Y}_{i.},\bar{Y}_{..})\\
& = & \sigma_{i}^2 + \dfrac{1}{t^2}\sum_{i=1}^{t}\sigma_{i}^2 - 2\Cov(\bar{Y}_{i.},\dfrac{1}{t}\sum_{j=1}^{t}\bar{Y}_{j.}) \\
& = & \sigma_{i}^2 + \dfrac{1}{t^2}\sum_{i=1}^{t}\sigma_{i}^2 -\dfrac{2}{t} \left( \sigma_{i}^2 + \sum_{j=1}^{t} \Cov(\bar{Y}_{i.},\bar{Y}_{j.}) \right), i \neq j \\
& = & \sigma_{i}^{2} (1 -\dfrac{2}{t})+ \dfrac{1}{t^2}\sum_{i=1}^{t}\sigma_{i}^2 -\dfrac{2}{t} \left(\sum_{j=1}^{t} \Cov(\bar{Y}_{i.},\bar{Y}_{j.}) \right), i \neq j 
\label{at2}
\end{eqnarray}
In general, if the total variance is $\sigma^2$, the  variance of the $i^{th}$ treatment with $r_i$ replicates is $ \dfrac{1}{r_i} \sigma^2$, thus,
\begin{eqnarray}
\hat{\gamma} &=& \sigma^2 \left[ \dfrac{1}{r_i} \left( \dfrac{(t-2)}{t} \right) + \dfrac{1}{t^2} \sum_{i=1}^{t} \dfrac{1}{r_i} \right] -\dfrac{2}{t} \rho_{ij}
\label{at3}
\end{eqnarray}
where, for $i \neq j$,
\begin{eqnarray}
\rho_{ij}= \Cov(\bar{Y}_{i.},\bar{Y}_{j.})
\label{at4}
\end{eqnarray}
and,
\begin{eqnarray}
\hat{\sigma}^2= \Var({Y}_{ijk}-\bar{Y}_{i.k})
\label{at5}
\end{eqnarray}
\section{Distribution of $\kappa_\eta^{(p_i,q_i,\lambda)}$}
In this section, we derive the distribution of the proposed statistic for the $i^{th}$ treatment, $\kappa_\eta^{(p_i,q_i,\lambda)}$.
\begin{corollary}
\label{e413}
If the responses are normally distributed, the $k^{th}$ wavelet coefficient from the $i^{th}$ treatment, $\theta_{ik}$, in the proposed statistic should satisfy,
\begin{eqnarray}
\theta_{ik} \sim N(0,1)
\label{e414}
\end{eqnarray}
Therefore, for the $i^{th}$ treatment, $\kappa_\eta^{(p_i,q_i,\lambda)}$ can be given as,
\begin{eqnarray}
\kappa_\eta^{(p_i,q_i,\lambda)} &=& \sum_{k=1}^{p_i} \hat{\theta}_{ik}^2 + \sum_{k=1}^{q_i} \theta_{ik}^2\\
\label{e415}
\end{eqnarray}
which is the sum of squared wavelet coefficients after a threshold of $\lambda$ and the sum of squared wavelet coefficients without a threshold. Also,
\begin{eqnarray}
\sum_{k=1}^{p_i} \hat{\theta}_{ik}^2 & \sim & \left] \chi_{p_i}^{2} \right[ _\lambda \\
\sum_{k=1}^{q_i} \theta_{ik}^2 & \sim & \chi_{q_i}^2
\label{e416}
\end{eqnarray}
where, $\left] \chi_{p_i}^{2} \right[ _\lambda $ is a truncated Chi-squared distribution with $p_i$ degrees of freedom.
\end{corollary}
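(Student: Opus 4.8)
The plan is to reduce the entire statement to two facts: that the standardized, wavelet-transformed coefficients are jointly independent standard normals, and that hard thresholding acts coordinatewise, so that the surviving squared coefficients are independent left-truncated $\chi^2_1$ variables. Everything else is bookkeeping.

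First I would establish the marginal law $\theta_{ik}\sim N(0,1)$ together with joint independence. Under $H_0$ the centered mean $\bar Y_{i.}-\bar Y_{..}$ is a linear combination of the Gaussian responses in \eqref{e1}, hence itself Gaussian; by \eqref{at1} it has mean zero, and by the variance computation in \eqref{at2}--\eqref{at3} the standardization through $\hat\gamma$ in \eqref{e19} rescales it to unit variance, producing an input vector distributed as $N(\mathbf{0},\mathbf{I})$. The transform $\mathbb{W}$ is orthonormal, so for $Z\sim N(\mathbf{0},\mathbf{I})$ one has $\mathbb{W}Z\sim N(\mathbf{0},\mathbb{W}\mathbb{W}^{\prime})=N(\mathbf{0},\mathbf{I})$, the spherical Gaussian being rotation invariant. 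This simultaneously yields $\theta_{ik}\sim N(0,1)$ for each $k$ and the mutual independence of the coefficients, i.e.\ exactly the decorrelation property invoked in the Motivation section.

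Next I would dispatch the unthresholded block: since the $q_i$ coefficients retained without thresholding are independent $N(0,1)$ variables, $\sum_{k=1}^{q_i}\theta_{ik}^2$ is a sum of squares of i.i.d.\ standard normals, which is by definition $\chi^2_{q_i}$. Then I would treat the thresholded block. Hard VisuShrink sets $\hat\theta_{ik}=\theta_{ik}\,\mathbf{1}(|\theta_{ik}|>\lambda)$, so the $p_i$ surviving coefficients are precisely those obeying $|\theta_{ik}|>\lambda$, equivalently $\theta_{ik}^2>\lambda^2$. Each surviving square is therefore a $\chi^2_1$ variate conditioned on exceeding $\lambda^2$, i.e.\ a $\chi^2_1$ law left-truncated at $\lambda^2$; because the thresholding decision for coordinate $k$ depends on $\theta_{ik}$ alone, conditioning on the survival event preserves independence across $k$. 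The sum $\sum_{k=1}^{p_i}\hat\theta_{ik}^2$ of $p_i$ such independent truncated unit-degree-of-freedom chi-squares is what the notation $\left]\chi^2_{p_i}\right[_\lambda$ denotes, which gives the second displayed law.

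The main obstacle is the first step, and it is conceptual rather than computational: the claim $\theta_{ik}\sim N(0,1)$ with genuine joint independence requires the covariance of the standardized input to be exactly $\mathbf{I}$, whereas wavelet decorrelation is in general only approximate and $\hat\gamma$ is an estimate rather than the true scale. One must either adopt the normality-based idealization the abstract flags (treating the whitening as exact) or argue that the residual correlation is negligible in the high-dimensional regime. A secondary delicacy is that $p_i$ is itself random---it is the number of coefficients exceeding $\lambda$---so the stated law should be read conditionally on the survivor count, and it is the equivalence $\{|\theta_{ik}|>\lambda\}=\{\theta_{ik}^2>\lambda^2\}$ that converts the symmetric two-sided excision of the normal into the one-sided left truncation of the chi-square.
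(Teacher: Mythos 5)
Your proposal is correct and follows essentially the same route the paper takes: the paper gives no explicit proof of this corollary, but its supporting material (Section~\ref{ap.sec.5} and Appendix~\ref{A1}) uses exactly your representation --- orthonormality of $\mathbb{W}$ applied to a standardized Gaussian vector to get i.i.d.\ $N(0,1)$ coefficients, the definition $X_i = Z_i\,\mathbb{I}_{\{|Z_i|>\lambda\}}$ for the hard-thresholded block, and $\sum_{k} X_k^2 \sim \left]\chi^2_{p_i}\right[_\lambda$ with the untouched block giving $\chi^2_{q_i}$. The caveats you raise (exactness of the whitening by $\hat\gamma$, and the randomness of the survivor count $p_i$) are genuine loose ends in the paper itself rather than defects of your argument.
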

The truncated chi-squared distribution in  Corollary \eqref{e413} is different from the conventional truncated distributions. The commonly used truncated normal random variable (X) has support  $X \in (\lambda_1,\lambda_2), -\infty < \lambda_1 < \lambda_2 < \infty$ and the random variable $Y=X^2$ will have a truncated chi-squared distribution with support $ (0,\lambda^2]$ where $\lambda = \max(\lambda_1,\lambda_2)$.\\
\begin{proposition}
The exact distribution of the statistic, $\kappa_\eta^{(p_i,q_i,\lambda)}$, defined in Corollary \eqref{e413} is,
\label{p01}
\begin{eqnarray}
f_{\kappa_\eta^{(p_i,q_i,\lambda)}}(x) &=&  f_C(x)\dfrac{1-\Phi_B(p_i\lambda^2,\nicefrac{q_i}{2},\nicefrac{p_i}{2})}{1-\Phi_G(\nicefrac{p_i\lambda^2}{2})}
\label{e417}
\end{eqnarray}
where,  $C \sim \chi^2_{p_i+q_i}$, $\Phi_B(\bullet)$ and $\Phi_G(\bullet)$ are the cumulative distribution functions of a Beta and Gamma random variables respectively. In equation \eqref{e417}, $p_i$ and $q_i$ are the degrees of freedom, and along with $\lambda$  form the parameters for $\kappa_\eta^{(p_i,q_i,\lambda)}$. For a profile of dimension `n', define $n_t= \dfrac{2^J}{2^l_t}$
\begin{eqnarray}
p_i &=& n-n_t \\
q_i &=& n_t
\label{e418}
\end{eqnarray}
where, $l_t$ is the level level up to which a thresholding rule is applied.
\end{proposition}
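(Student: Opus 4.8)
The plan is to treat $\kappa_\eta^{(p_i,q_i,\lambda)}$ as the sum of the two independent pieces identified in Corollary \ref{e413}, namely $A=\sum_{k=1}^{p_i}\hat\theta_{ik}^2$ with the truncated law $\left]\chi^2_{p_i}\right[_\lambda$ and $B=\sum_{k=1}^{q_i}\theta_{ik}^2\sim\chi^2_{q_i}$, and to obtain the density of $A+B$ by convolution. First I would make the truncation explicit: because hard thresholding retains only contributions whose aggregate exceeds the cut $p_i\lambda^2$, I would model $A$ as a $\chi^2_{p_i}$ variable conditioned on $A\ge p_i\lambda^2$, so that $f_A(a)=f_{\chi^2_{p_i}}(a)\big/\bigl(1-F_{\chi^2_{p_i}}(p_i\lambda^2)\bigr)$ on $a\ge p_i\lambda^2$. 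Rewriting the lower-tail normaliser through the identity $F_{\chi^2_{p_i}}(t)=\Phi_G(t/2)$ with Gamma shape $p_i/2$ already produces the denominator $1-\Phi_G(p_i\lambda^2/2)$ appearing in \eqref{e417}.

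Next I would write $f_{\kappa_\eta}(x)=\int f_A(a)\,f_{\chi^2_{q_i}}(x-a)\,da$, where the effective range $p_i\lambda^2\le a\le x$ is forced by the two supports and hence $\kappa_\eta$ lives on $(p_i\lambda^2,\infty)$. Pulling out the constant normaliser, the remaining integral $\int_{p_i\lambda^2}^{x}f_{\chi^2_{p_i}}(a)\,f_{\chi^2_{q_i}}(x-a)\,da$ is a truncated convolution of two chi-squared densities. Were the lower limit $0$, this would be exactly the $\chi^2_{p_i+q_i}$ density $f_C(x)$ with $C\sim\chi^2_{p_i+q_i}$; the task is therefore to quantify the defect introduced by cutting the range at $p_i\lambda^2$.

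The key device for this is the Gamma--Beta relationship: if $A_0\sim\chi^2_{p_i}$ and $B_0\sim\chi^2_{q_i}$ are independent, then $D=A_0+B_0\sim\chi^2_{p_i+q_i}$ and the ratio $V=A_0/D\sim\mathrm{Beta}(p_i/2,q_i/2)$ are independent. Conditioning on $D=x$ turns the event $\{A_0\ge p_i\lambda^2\}$ into $\{V\ge p_i\lambda^2/x\}$, so the truncated convolution factors as $f_C(x)\cdot\bigl(1-\Phi_B(p_i\lambda^2/x,\,p_i/2,\,q_i/2)\bigr)$ by independence of $V$ and $D$. Substituting this back and combining with the Gamma normaliser from the first step assembles the stated density, up to pinning down the precise argument and parameter order of the Beta CDF.

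I expect the main obstacle to be exactly this last bookkeeping. The cleanness of the answer hinges on interpreting the truncation at the level of the aggregate $A$ rather than coordinate by coordinate; a literal coordinate-wise truncation would make $A$ a $p_i$-fold convolution of singly-truncated $\chi^2_1$ densities, which admits no comparable closed form, so I would first justify that the sum-level conditioning is the operative one. I would also be careful with the Beta term: the conditioning set $\{V\ge p_i\lambda^2/x\}$ depends on $x$ through the argument $p_i\lambda^2/x$, and the symmetry $1-V\sim\mathrm{Beta}(q_i/2,p_i/2)$ is what permits rewriting the tail with the parameter order displayed in \eqref{e417}. Reconciling that dependence with the factor as written, and checking that the resulting density integrates to one over $(p_i\lambda^2,\infty)$, is the delicate part of the verification.
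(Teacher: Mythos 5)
Your proposal is correct and follows essentially the same route as the paper: both treat the statistic as the convolution of an aggregate-truncated $\chi^2_{p_i}$ density (normalised by $1-\Phi_G(\nicefrac{p_i\lambda^2}{2})$) with a $\chi^2_{q_i}$ density over $[p_i\lambda^2,x]$, and both identify the truncation defect as the upper tail of a $\mathrm{Beta}(\nicefrac{p_i}{2},\nicefrac{q_i}{2})$ evaluated at $\nicefrac{p_i\lambda^2}{x}$ --- the paper via the substitution $r=\nicefrac{t}{s}$ inside the integral, you via the equivalent Gamma--Beta independence lemma. Your closing caveats are well placed: the displayed proposition writes the Beta argument as $p_i\lambda^2$ with parameter order $(\nicefrac{q_i}{2},\nicefrac{p_i}{2})$, whereas the paper's own proof (and your derivation) yields $\nicefrac{p_i\lambda^2}{x}$ with order $(\nicefrac{p_i}{2},\nicefrac{q_i}{2})$, so the $x$-dependence and parameter-order bookkeeping you flagged is precisely where the statement and the proof of the paper disagree.
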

\begin{proof}
The proof of Proposition \eqref{p01} is shown here. First, it should be noted the density of $\kappa_\eta^{(p_i,q_i,\lambda)}$ is a convolution of the densities for $\left] \chi_{p_i}^2 \right[_\lambda$ and $\chi_{q_i}^2$, that is,
\begin{eqnarray}
f_{\kappa_\eta^{(p_i,q_i,\lambda)}}(x) = f_{\chi^{2}_{q_i}}(x) \otimes f_{\left] \chi^{2}_{p_i} \right[_\lambda}(x)
\label{e419}
\end{eqnarray}
We know,
\begin{eqnarray}
f_{\chi^{2}_{q_i}}(x)  &=& \dfrac{1}{2^{\nicefrac{q_i}{2}}\Gamma(\nicefrac{q_i}{2})} x^{\nicefrac{q_i}{2}-1} e^{-\nicefrac{x}{2}} \\
f_{\left] \chi^2_{p_i} \right[_\lambda}(x) &=& \dfrac{x^{\nicefrac{p_i}{2}-1}e^{-\nicefrac{x}{2}}}{2^{\nicefrac{p_i}{2}}[1-\Phi_{G}(\nicefrac{p_i\lambda^2}{2})]\Gamma(\nicefrac{p_i}{2})}
\label{e420}
\end{eqnarray}
In equation \eqref{e420}, `G' is a Gamma random variable with shape and scale parameters as $\nicefrac{p_i}{2}$ and 1 respectively. The density of a truncated $\chi^2$ distribution is shown in appendix \eqref{A1}. To simplify notations in evaluating the convolution integral, define: $S \equiv \kappa_\eta^{(p_i,q_i,\lambda)}, W \equiv \chi^2_{\nicefrac{q_i}{2}}, \textrm{and}\; Z \equiv \left] \chi^2_{p_i} \right[_\lambda $, and
\begin{eqnarray}
S= W+Z = \kappa_\eta^{(p_i,q_i,\lambda)}
\Rightarrow f_S(s) = \displaystyle \int_{j\lambda^2}^{s} f_W(s-t)f_Z(t)dt
\label{e421}
\end{eqnarray}
Hence,
\begin{eqnarray}
f_S(s) &=& \displaystyle \int_{p_i\lambda^2}^{s} \dfrac{(s-t)^{\nicefrac{q_i}{2}-1}e^{-\nicefrac{(s-t)}{2}}}{2^{\nicefrac{q_i}{2}}\Gamma(\nicefrac{q_i}{2})}\dfrac{s^{\nicefrac{p_i}{2}-1}e^{-\nicefrac{s}{2}}}{2^{\nicefrac{p}{2}}\Gamma(\nicefrac{p_i}{2})(1-\phi_G(\nicefrac{p_i\lambda^2}{2}))}dt \\
&=& \dfrac{1}{\Gamma(\nicefrac{p_i}{2})\Gamma(\nicefrac{q_i}{2})2^{\nicefrac{(p_i+q_i)}{2}}} \displaystyle \int_{p_i\lambda^2}^{s} \dfrac{(s-t)^{\nicefrac{q_i}{2}-1}e^{-\nicefrac{s}{2}}t^{\nicefrac{p_i}{2}-1}}{1-\phi_G(\nicefrac{p_i\lambda^2}{2})}dt\\
&=& \dfrac{1}{\Gamma(\nicefrac{p_i}{2})\Gamma(\nicefrac{q_i}{2})2^{\nicefrac{(p_i+q_i)}{2}}(1-\phi_G(\nicefrac{p_i\lambda^2}{2}))} \displaystyle \int_{p_i\lambda^2}^{s} (s-t)^{\nicefrac{q_i}{2}-1}e^{-\nicefrac{s}{2}}t^{\nicefrac{p_i}{2}-1}dt\\
&=& \dfrac{1}{\Gamma(\nicefrac{p_i}{2})\Gamma(\nicefrac{q_i}{2})2^{\nicefrac{(p_i+q_i)}{2}}(1-\phi_G(\nicefrac{p_i\lambda^2}{2}))} \displaystyle \int_{p_i\lambda^2}^{s} (s-t)^{\nicefrac{q_i}{2}-1}e^{-\nicefrac{s}{2}}t^{\nicefrac{p_i}{2}-1}dt\\
&=& \dfrac{e^{-\nicefrac{s}{2}}s^{\nicefrac{p_i}{2}-1}s^{\nicefrac{q_i}{2}-1}}{\Gamma(\nicefrac{p_i}{2})\Gamma(\nicefrac{q_i}{2})2^{\nicefrac{(p_i+q_i)}{2}}(1-\phi_G(\nicefrac{p_i\lambda^2}{2}))} \displaystyle \int_{j\lambda^2}^{s} (1-\nicefrac{t}{s})^{\nicefrac{q_i}{2}-1}(\nicefrac{t}{s})^{\nicefrac{p_i}{2}-1}dt
\label{e422}
\end{eqnarray}
Using a substitution, $r=\nicefrac{t}{s}$, the above integral becomes,
\begin{eqnarray}
&=& \dfrac{e^{-\nicefrac{s}{2}}s^{\nicefrac{(p_i+q_i)}{2}-1}}{\Gamma(\nicefrac{p_i}{2})\Gamma(\nicefrac{q_i}{2})2^{\nicefrac{(p_i+q_i)}{2}}(1-\Phi_G(\nicefrac{p_i\lambda^2}{2}))} \displaystyle \int_{\nicefrac{p_i\lambda^2}{s}}^{1} (1-r)^{\nicefrac{q_i}{2}-1}(r)^{\nicefrac{p_i}{2}-1}dr\\
\label{a53}
&=& \dfrac{e^{-\nicefrac{s}{2}}s^{\nicefrac{(p_i+q_i)}{2}-1}}{\Gamma(\nicefrac{(p_i+q_i)}{2})2^{\nicefrac{(p_i+q_i)}{2}}}\dfrac{(1-\Phi_B(j\lambda^2,\nicefrac{p_i}{2},\nicefrac{q_i}{2}))}{(1-\Phi_G(\nicefrac{p_i\lambda^2}{2}))}\\
\label{a54}
&=& f_C(s)\dfrac{(1-\Phi_B(\nicefrac{p_i\lambda^2}{s},\nicefrac{p_i}{2},\nicefrac{q_i}{2}))}{(1-\Phi_G(\nicefrac{p_i\lambda^2}{2}))}
\label{a55}
\end{eqnarray}
where $C \sim \chi^2 _{(p_i+q_i)}$.
\begin{flushright}
\textbf{Q.E.D}
\end{flushright}
\subsection{Mean and Variance of $\kappa_\eta^{(p_i,q_i,\lambda)}$}
\label{ap.sec.5}
Consider,
\begin{eqnarray}
X_i \sim Z_i \mathbb{I}_{\{|Z_i| > \lambda\}}
\label{a57}
\end{eqnarray}
where $Z_i \sim N(0,1)$. Under normality assumptions,
\begin{eqnarray}
\sum_{i=1}^{p_i} X_i^2 \sim \left] \chi^2_{p_i} \right[_\lambda
\label{a58}
\end{eqnarray}
Thus,
\begin{eqnarray}
\E[X_i^2] &=& \displaystyle \int_{-\infty}^{\infty} z_i^2 \dfrac{1}{\sqrt{2\pi}}e^{-\nicefrac{z_i}{2}}\mathbb{I}_{\{|Z_i| > \lambda\}}dz_i\\
&=& \displaystyle \int_{-\infty}^{-\lambda}  z_i^2 \dfrac{1}{\sqrt{2\pi}}e^{-\nicefrac{z_i}{2}} dz_i + \int_{\lambda}^{\infty}  z_i^2 \dfrac{1}{\sqrt{2\pi}}e^{-\nicefrac{z_i}{2}} dz_i
\label{a59}
\end{eqnarray}
Define, $Y= \nicefrac{Z_i}{2}$, then,
\begin{eqnarray}
\E[X_i^2] &=& \displaystyle \int_{-\infty}^{-\nicefrac{\lambda}{2}}  4y^2 \dfrac{1}{\sqrt{2\pi}}e^{-y} (2)dy + \int_{\nicefrac{\lambda}{2}}^{\infty}  4y^2 \dfrac{1}{\sqrt{2\pi}}e^{-y} (2)dy\\
&=& \dfrac{4\Gamma(3)}{\sqrt{2\pi}} \displaystyle \int_{\nicefrac{\lambda}{2}}^{\infty} \dfrac{y^{3-1}e^{-y}}{\Gamma(3)}(2dy)\\
&=& \dfrac{8\Gamma(3)}{\sqrt{2\pi}}[1-\Phi_G(\nicefrac{\lambda}{2},3,1)] \triangleq \mu \\
\Rightarrow \E[ \left] \chi^2_{p_i} \right[ ] &=& p_i\mu
\label{a60}
\end{eqnarray}
\begin{flushright}
\textbf{Q.E.D}
\end{flushright}
To find the variance, define, $W= \displaystyle \sum_{i=1}^{p_i} X_i^2$. Then, due to independence of wavelet coefficients,
\begin{eqnarray}
\Var[W]&=& p_i\Var[X_i^2]\\
&=& p_i(\E[X_i^4] - \mu^2) \\
\E[X_i^4] & =& \displaystyle \int_{-\infty}^{\infty} z_i^{4} \mathbb{I}_{\{|z_i| > \lambda\}} e^{-\nicefrac{z_i}{2}}dz_i
\label{a61}
\end{eqnarray}
Define, $Y= \nicefrac{Z_i}{2}$, then,
\begin{eqnarray}
\E[X_i^4] &=& \displaystyle \int_{-\infty}^{-\nicefrac{\lambda}{2}}  16y^4 \dfrac{1}{\sqrt{2\pi}}e^{-y} (2)dy + \int_{\nicefrac{\lambda}{2}}^{\infty}  16y^5 \dfrac{1}{\sqrt{2\pi}}e^{-y} (2)dy\\
&=& \dfrac{16\Gamma(5)}{\sqrt{2\pi}} \displaystyle \int_{\nicefrac{\lambda}{2}}^{\infty} \dfrac{y^{5-1}e^{-y}}{\Gamma(5)}(2dy)\\
&=& \dfrac{32\Gamma(5)}{\sqrt{2\pi}}[1-\Phi_G(\nicefrac{\lambda}{2},5,1)]
\label{a62}
\end{eqnarray}
Thus,
\begin{eqnarray}
\sigma^2 &=& \dfrac{32p_i\Gamma(5)}{\sqrt{2\pi}}[1-\Phi_G(\nicefrac{\lambda}{2},5,1)] -p_i\mu^2
\label{a63}
\end{eqnarray}
From equation \eqref{e415}, the mean of $\kappa_\eta^{(p_i,q_i,\lambda)}$ can be given as,
\begin{eqnarray}
\E[\kappa_\eta^{(p_i,q_i,\lambda)}] &=& \E[\left] \chi^2_{p_i} \right[_\lambda] + E[\chi^2_{q_i}] \\
& = & p_i\mu + q_i  \triangleq \mu_{\kappa_{\eta_i}}
\label{a64}
\end{eqnarray}
Similarly, the variance of $\kappa_\eta^{(p_i,q_i,\lambda)}$  can be given as,
\begin{eqnarray}
\sigma^2_{\kappa_{\eta_i}} &=& \Var [\left] \chi^2_{p_i} \right[_\lambda] + \Var [ \chi^2_{q_i}]-2\Cov(\left] \chi^2_{p_i} \right[_\lambda ,\chi^2_{q_i})\\
\Rightarrow \sigma^2_{\kappa_{\eta_i}} &=& \Var [\left] \chi^2_{p_i} \right[_\lambda] + \Var [ \chi^2_{q_i}]  \quad\quad\quad \textrm{(independent coefficients)} \\
&=& \sigma^2 + 2q_i
\label{a65}
\end{eqnarray}
\end{proof}
\section{Distribution of $\kappa_\eta^{(p,q,\lambda)}$ }
Due to the independence of wavelet coefficients and the assumptions that samples form `T' treatments are independent of each other, the distribution of $\kappa_\eta^{(p,q,\lambda)}$ where,
\begin{eqnarray}
p &=& \sum_{i=1}^{T} p_i \\
q &=& \sum_{i=1}^{T} q_i
\label{a66}
\end{eqnarray}
and $`\lambda$' is the threshold used in wavelet shrinkage for each treatment. The expected value and variance can be given as,
\begin{eqnarray}
\mu_{\kappa_\eta^{p,q}} &=& \sum_{i=1}^{T} \mu_{\kappa_{\eta_i}}\\
&=& \sum_{i=1}^{T} p_i \mu \\
&=& p\mu + q\\
\label{a671}
\sigma^2_{\kappa_\eta^{p,q}} &=& \sum_{i=1}^{T} \sigma^2_{\kappa_{\eta_i}}\\
&=& \dfrac{32p\Gamma(5)}{\sqrt{2\pi}}[1-\Phi_G(\nicefrac{\lambda}{2},5,1)] -p\mu^2 + 2q
\label{a67}
\end{eqnarray}
Thus, the distribution of the statistic in equation \eqref{e18} is $\kappa_\eta^{(p,q,\lambda)}$ with $m_i=p_i+q_i$. The critical values for any test involving this statistic can therefore be obtained using the distribution for $\kappa_\eta^{(p,q,\lambda)}$.
\section{Approximations for $\kappa_\eta^{(p,q,\lambda)}$ }
Using Central Limit Theorem (CLT) \cite{rosenthal2006first}, a normal approximation for the statistic $\kappa_\eta^{(p,q,\lambda)}$ can be obtained as,
\begin{eqnarray}
\dfrac{(S_{n}-\mu_{\kappa_\eta^{p,q}})}{\sigma_{\kappa_\eta^{p,q}}} \sim \xi
\label{e57}
\end{eqnarray}
where $\xi \sim N(0,1)$ and $S_n \sim \kappa_\eta^{(p,q,\lambda)}$.
\subsection{Chi-squared Approximation}
With high-dimensional profiles, as $n \rightarrow \infty \Rightarrow p \rightarrow \infty , q \rightarrow \infty$, using equation \eqref{e420}, it can be shown that,
\begin{eqnarray}
\lim_{p \rightarrow \infty} \left] \chi^2_{p} \right[_\lambda &\xrightarrow[]{d}& \chi^2_{p}\\
\Rightarrow \lim_{p \rightarrow \infty} \kappa_\eta^{(p,q,\lambda)} &\xrightarrow[]{d}& \chi^2_{p+q}
\label{a68}
\end{eqnarray}
An asymptotic moment based $\chi^2$-approximation can be deduced from \eqref{a68}. Since the first moment of a Chi-squared distribution is its parameter, using equation (77) and \eqref{a68}, it can be inferred that.
\begin{eqnarray}
\kappa_\eta^{(p,q,\lambda)} \sim \chi^2_{\lceil p\mu+q \rceil}
\label{a69}
\end{eqnarray}
In equation \eqref{a69}, the degrees of freedom is rounded up to the nearest integer. However, the actual fractional degrees of freedom can be used instead.
\subsection{Binomial-Normal Approximation}
Recall from equation \eqref{e415}, the statistic is comprised of two components. The first corresponds to the coefficients that are thresholded. These coefficients can be modeled using a binomial distribution. Under the assumption of normality and universal hard threshold, the probability of a non-zero wavelet coefficient ($\pi$) after a threshold is,
\begin{eqnarray}
\pi = \Pr(|\theta_i| \geq \lambda) &=& \Pr(\theta_i > \lambda) + \Pr(-\theta_i >\lambda)\\
&=& \Pr(\theta_i > \lambda) + \Pr(\theta_i < - \lambda) \\
&=& 2\Pr(\theta_i > \lambda) \\
&=& 2(1-\Pr(\theta_i \leq \lambda)) \\
&=& 2(1- \Phi(1.18\sqrt{J}))
\label{e418}
\end{eqnarray}
where $\Phi(\bullet)$ is the CDF of a standard normal distribution. Since $p_i$ (in equation \eqref{e415}) is the number of non-zero coefficients after a threshold, it can be approximated using the expectation of a binomial random variable.
\begin{eqnarray}
p_i &=& \E [B_i]\\
\Rightarrow p_i &=& (n-n_t)\pi
\label{a70}
\end{eqnarray}
where, $B_i \sim \textrm{Bin}(n-n_t,\pi)$ with $n_t=\nicefrac{n}{2^{l_t}}$. The Binomial-Normal approximation for $\kappa_\eta^{p,q}$ is,
\begin{eqnarray}
\dfrac{(S_{n}-\mu_{\kappa_\eta^{p,q}})}{\sigma_{\kappa_\eta^{p,q}}} \sim \xi
\label{a71}
\end{eqnarray}
with $p= T(n-n_t)\pi$.\\
\subsection{Binomial-Chi-squared Approximation}
A binomial distribution based Chi-squared approximation for $\kappa_\eta^{(p,q,\lambda)}$ can be obtained by using $p=T(n-n_t)\pi$ in equation \eqref{a69}. The fractional degrees of freedom can be used without rounding with this approximation as well.
\section{Conclusion}
A wavelet based test statistic primarily used for testing differences in high-dimensional profiles was proposed in this report. The focus of this report was to discuss the statistical properties of the proposed statistic. In particular, its exact distribution seemed feasible and it became the premise of this report. The existence of an exact distribution for a test statistic is crucial in finding critical and p-values. Also, such exact distributions offer a strong foundation for future research in this area. Therefore, the discussions in this report involved proofs and derivations pertaining to the exact distribution. In addition to the exact distribution, approximating distributions were also provided. The statistic proposed in this report assumes normality and in the future, it may be possible to obtain similar proofs for an exact distribution when the assumption is violated. The report did not discuss any simulations or visualizations as it was intended to be purely theoretical.
\appendix
\section{Appendix}
\subsection{Density and MGF of a $ \left] \chi^2_{M} \right[_\lambda$ Distribution}
\label{A1}
Consider a truncated normal random variable with mean '0' and standard deviation '1', then its density is given by,
\begin{eqnarray}
f_X(x;0,1,\lambda) &=& \dfrac{\phi(x)}{2\Phi(-\lambda)}\mathbb{I}_{(-\infty,-\lambda]}+\dfrac{\phi(x)}{2(1-\Phi(\lambda))}\mathbb{I}_{[\lambda,\infty)}
\label{a28}
\end{eqnarray}
where $\mathbb{I}(\bullet)$ is the indicator function, $\Phi(\bullet)$ and $\phi(\bullet)$ are the cumulative distribution function and the density function of a standard normal random variable respectively. That is,
\begin{eqnarray}
\phi(x) &=& \dfrac{1}{\sqrt{2\pi}}e^{-\nicefrac{x^2}{2}}\\
\Phi(x) &=& \displaystyle \int_{-\infty}^{x}  \dfrac{1}{\sqrt{2\pi}}e^{-\nicefrac{t^2}{2}}dt
\label{a29}
\end{eqnarray}
Let,
\begin{eqnarray}
C_M &=& \displaystyle \sum_{i=1}^{M} X_{i}^2 \\
\label{a35}
\end{eqnarray}
where, $supp(C_M) \in [M\lambda^2,\infty)$, and $X_i$ is a truncated normal random variable with support in $(-\infty,-\lambda] \cup [\lambda,\infty)$. Then,
\begin{eqnarray}
F_{C_M}(c)&=& \Pr(C_M \leq c)\\
&=& \displaystyle \int_{X_1} \int_{X_2} \ldots \int_{X_M} \prod_{i=1}^{M} f_{X_i}(x_i)dx_i
\label{a36}
\end{eqnarray}
Using \eqref{a28},
\begin{eqnarray}
F_{C_M}(c)&=& \dfrac{1}{\sqrt{[2(2\pi(1-\Phi(\lambda)))]^M}}\displaystyle \int_{X_1} \int_{X_2} \ldots \int_{X_M}  e^{\displaystyle -\sum_{i=1}^{M}\dfrac{x_{i}^2}{2}}\prod_{i=1}^{M} dx_i
\label{a37}
\end{eqnarray}
Geometrically, one can interpret $C_M$ as $\sqrt{R}$, where $R$ is the radius of an $(M-1)$-sphere (the surface of the $M$-ball)\cite{henderson2005experiencing} centered at $ \mathbb{O} \in \mathbb{R}^M$, the origin.\\
\\
Define, $\xi = (2\sqrt{2\pi}(1-\Phi(\lambda)))^{-1}$, then the integral in \eqref{a37} can be expressed in terms of the surface area of a (M-1) sphere, $A_M$,
\begin{eqnarray}
F_{C_M}(c)&=& \dfrac{1}{\xi^M} \displaystyle \int_{\sqrt{M}\lambda}^{\sqrt{c}} Ae^{-\nicefrac{R^2}{2}}dR
\label{a38}
\end{eqnarray}
where,
\begin{eqnarray}
R &\in & [\sqrt{M}\lambda,\sqrt{c}]\\
\label{a39}
A_M &=& \dfrac{MR^{M-1}\pi^{\nicefrac{M}{2}}}{\Gamma\left(\dfrac{M}{2}+1\right)}
\label{a40}
\end{eqnarray}
Using equations \eqref{a39} and \eqref{a40} in \eqref{a38} with the property of gamma functions $\Gamma
\left(\dfrac{M}{2}+1\right) = \dfrac{M}{2}\Gamma\left(\dfrac{M}{2}\right)$,
\begin{eqnarray}
F_{C_M}(c) &=& \dfrac{2\pi^{\nicefrac{M}{2}}}{\xi^{M} \Gamma \left(\dfrac{M}{2}\right)} \displaystyle \int_{\sqrt{M}\lambda}^{\sqrt{c}} R^{M-1}e^{-\nicefrac{R^2}{2}}dR
\label{a41}
\end{eqnarray}
Using the second fundamental theorem of calculus in equation \eqref{a41}, the density of $C_M$ is derived as,
\begin{eqnarray}
f_{C_M}(c)= \dfrac{\pi^{\nicefrac{M}{2}}c^{\nicefrac{M}{2}-1}e^{\nicefrac{c}{2}}}{\xi^M \Gamma\left(\dfrac{M}{2}\right)}
\label{a42}
\end{eqnarray}
Replacing $\xi$, the density can be obtained as,
\begin{eqnarray}
f_{C_M}(c) &\propto & \dfrac{c^{\nicefrac{M}{2}-1}e^{\nicefrac{c}{2}}}{2^{\nicefrac{M}{2} + 1} (1-\Phi(\lambda))^M \Gamma\left(\dfrac{M}{2}\right)} \\
&=& K \dfrac{c^{\nicefrac{M}{2}-1}e^{\nicefrac{c}{2}}}{2^{\nicefrac{M}{2} + 1} (1-\Phi(\lambda))^M \Gamma\left(\dfrac{M}{2}\right)}
\label{a43}
\end{eqnarray}
Equation \eqref{a43} represents an unnormalized density and by integrating $f_{C_M}$ over the support of $C_M$, the actual normalized density, $f_{C_M}$, can be obtained. Therefore, we require,
\begin{eqnarray}
K \displaystyle \int_{M\lambda^2}^{\infty} f_{C_M}(c)dc  =1
\end{eqnarray}
where `K' is the normalizing constant. That is,
\begin{eqnarray}
K \displaystyle \int_{M\lambda^2}^{\infty} \dfrac{c^{\nicefrac{M}{2}-1}e^{\nicefrac{c}{2}}}{2^{\nicefrac{M}{2} + 1} (1-\Phi(\lambda))^M \Gamma\left(\dfrac{M}{2}\right)} dc  =1
\label{a44}
\end{eqnarray}
Let $b=\dfrac{c}{2}$, then
\begin{eqnarray}
K \displaystyle \int_{M\lambda^2}^{\infty} f_{C_M}(c)dc =K \displaystyle  \dfrac{1}{2(1-\Phi(\lambda))^M} \int_{M\nicefrac{\lambda^2}{2}}^{\infty} b^{\nicefrac{M}{2}-1}e^{-b}db \\
\label{a45}
\end{eqnarray}
The integral in equation \eqref{a45} corresponds to the upper incomplete gamma function, $\Gamma\left(\nicefrac{M}{2},\nicefrac{M\lambda^2}{2}\right)$. Equivalently, the upper incomplete gamma function can be expressed using the cumulative distribution function of a Gamma(s,1) random variable, $\Phi_G(\bullet)$. Here, `s' is the scale parameter. Using, $\Gamma\left(s,x\right)=[1-\Phi_G(x)]\Gamma(s)$ in equation \eqref{a45} and setting it equal to 1, the normalizing constant can be found as,
\begin{eqnarray}
K &=& \dfrac{2(1-\Phi(\lambda))^M}{1-\Phi_G(\nicefrac{M\lambda^2}{2})}
\label{a46}
\end{eqnarray}
Using equation \eqref{a46} in \eqref{a43}, the density can be deduced as,
\begin{eqnarray}
f_{C_M}(c) &=& \dfrac{c^{\nicefrac{M}{2}-1}e^{-\nicefrac{c}{2}}}{2^{\nicefrac{M}{2}}[1-\Phi_{G}(\nicefrac{M\lambda^2}{2})]\Gamma(\nicefrac{M}{2})}
\label{a47}
\end{eqnarray}
Due to independence,
\begin{eqnarray}
\E[e^{C_{M}t}]&=&\E\left[e^{\sum_{i=1}^{M}Y_{i}t}\right]\\
&=& \displaystyle \prod_{i=1}^{M} \E[e^{Y_{i}t}]
\label{a48}
\end{eqnarray}
and since $Y_i \sim  \left] \chi^2_{1} \right[_\lambda$, the moment generating function can be found as,
\begin{eqnarray}
M_{C_M}(t) &=& (1-2t)^{-\nicefrac{M}{2}}, \quad t \leq 0
\label{a49}
\end{eqnarray}
\bibliographystyle{natbib}
\bibliography{DWT-TR-Main}
\end{document}